\newtheorem{thm}{Theorem}[section]
\newtheorem{lem}[thm]{Lemma}
\newtheorem{prop}[thm]{Proposition}
\theoremstyle{definition}
\newtheorem{defn}[thm]{Definition}
\theoremstyle{remark}
\numberwithin{equation}{section}
\newcommand{\be}{\begin{equation}}
\newcommand{\ee}{\end{equation}}
\newcommand{\R}{\mathbb R}
\newcommand{\eps}{\varepsilon}
\newcommand{\Om}{\Omega}
\newcommand{\p}{\partial}
\newcommand{\comment}[1]{}
\begin{document}

\title[Viscosity solutions]{Viscosity solutions and the minimal surface system} %
\author{O. Savin}
\address{Department of Mathematics, Columbia University, New York, NY 10027}
\email{\tt  savin@math.columbia.edu}

\begin{abstract}
We give a definition of viscosity solution for the minimal surface system and prove a version of Allard regularity theorem in this setting.
\end{abstract}

\maketitle

\section{Introduction}

There are two main approaches to the theory of nonlinear elliptic scalar equations. One of them is variational (the $L^2$ approach), and it is based on energy estimates. This applies to equations with divergence structure. The second one, which regards more general nonlinear equations, is the viscosity solution approach (or $L^\infty$ approach), and it is based solely on the maximum principle. 

For the general theory of nonlinear elliptic systems only the variational approach seems to be successful. The reason is that the maximum principle does not extend to graphs when the codimension is higher than one.

In this short note we show that purely nonvariational techniques can be employed in the special situation of the minimal surface system.

Minimal submanifolds are usually studied from the geometric measure theory point of view. There are not many available results concerning the minimal surface system. This is due in part to the examples of Lawson and Osserman \cite{LO} which show a quite different situation with respect to the minimal surface equation. Uniqueness does not hold, and the existence of classical (Lipschitz) solutions to the Dirichlet problem with smooth data may fail as well. They also gave an example of nontrivial global Lipschitz solution to the Bernstein problem $u: \R^4 \to \R^3$ obtained as a suitable scaling of the Hopf map $\eta:S^3 \to S^2$, 
$$u(x)=\frac{\sqrt 5}{2}|x|\, \, \eta \left( \frac {x}{|x|} \right), \quad \quad \eta(z_1,z_2)=(|z_1|^2-|z_2|^2, 2 z_1 \bar{z_2}).$$
However, the existence of classical solutions and the Bernstein theorem  hold under specific bound assumptions involving the principal values of $Du$, see \cite{F,JX,W}. 

The $n$-dimensional area functional of the graph of a $C^1$ map $u:\Om \subset \R^n \to \R^m$,
$$\Gamma:=\{ (x,u(x))| x\in \Om\}  \subset \R^{n+m} $$
is given by
$$\mathcal H^n(\Gamma)=\int_\Om \left (det(I +  Du ^T Du)\right)^{1/2} dx.$$
If $\Gamma$ is critical for the $n$-dimensional area functional, then $u$ is solution to the minimal surface system
\be \label{D}
div(DF(Du))=0,
\ee
where $$F(A) = (\det (I + A^T A))^\frac 12, \quad \quad A \in \R^{n \times m}.$$
If $u \in C^2$, after expanding the divergence in \eqref{D} we can rewrite it as the following system of $m$ equations (using the summation index convention) 
\be\label{MS}
F_{\alpha i, \beta j} (Du) \, \, u^\beta_{ij} =0,
\ee
$$ 1 \le i,j \le n, \quad 1 \le \alpha, \beta \le m.$$
 The minimal surface system is invariant under rigid motions of its graph in $\R^{n +m}$. 
 Let $X_0=(x_0,u(x_0))$ be a point on $\Gamma$. After a rigid motion, let us assume for simplicity that $X_0=0$, $Du(0)=0$. Then \eqref{MS} simply becomes
$$\triangle u^\alpha (0) =0.$$

Let $\mathcal S \subset \R^{n+m}$ be a $C^2$ hypersurface that touches the graph $\Gamma$ at the origin so that $\Gamma$ stays on one-side of $\mathcal S$. Assume for simplicity of notation that the normal to $\mathcal S$ at $0$ points in the direction of the $u^1$ coordinate axis.  

We can view $\mathcal S$ as the zero level surface of a function $H(X)$ defined in $\R^{n+m}$, $$X=(x,z), \quad \quad x \in \R^n, \quad z \in \R^m,$$ with $\nabla H(0)$ parallel to the $z_1$ axis, and assume $\Gamma \subset \{H \le 0\}$. Differentiating twice
$$H(x,u(x)) \le 0, \quad H(0)=0,$$
and using $Du(0)=0$, $H_i(0)=0$, $H_{z_\alpha}(0)=0$ if $\alpha \ne 1$  we find 
$$ H_{ii} + H_{z_1}u^1_{ii} \le 0  \quad \Longrightarrow \quad \triangle _x H (0) \le 0.$$
This means that the Laplace of $H$ along the tangent space to $\Gamma$ at $X_0$ is nonpositive.

\begin{defn}\label{d1}
We say that $H(X)$ is a {\it comparison function} for the minimal surface system in the open set $U \subset \R^{n+m}$ if at any point $X \in U$ we have $$\triangle_L H (X) >0$$ for any $n$ dimensional vectorspace $L$ which is normal to $\nabla H(X)$. Here $\triangle_LH(X)$ denotes the Laplace operator of $H$ restricted the vectorspace $L$ passing through $X$.  
\end{defn}

The discussion above says that if $H$ is a comparison function in a neighborhood of $0$ then any $n$ dimensional $C^2$ minimal submanifold $\Gamma$ cannot be locally tangent to the hypersurface $\mathcal S=\{H=H(0)\}$ at $0$ and with $\Gamma \subset \{H \le H(0)\}$.
 
\begin{defn}
Let $u:\Om \to \R^m$ be a continuous function. We say that $u$ is a{\it viscosity solution} of the minimal surface system \eqref{MS} if its graph $\Gamma$ cannot touch the level set of a comparison function $\{H=c\}$ from the side $\{ H \le c\}$. 
\end{defn}

We have the following easy consequences directly from the definition of viscosity solutions.

\begin{prop}
$u \in C^2(\Om)$ is a viscosity solution if and only if $u$ is a classical solution of the system \eqref{MS}.
\end{prop}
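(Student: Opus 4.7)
The plan is to prove each implication separately. The forward direction (classical implies viscosity) is essentially a restatement of the computation preceding Definition~\ref{d1}, while the converse direction requires constructing an explicit quadratic comparison function. For the forward direction, suppose for contradiction that $\Gamma$ touches the level set $\{H=c\}$ of some comparison function from the side $\{H\le c\}$ at a point $X_0$. Using the rigid-motion invariance of \eqref{MS} and of the comparison condition, I normalize so that $X_0=0$, $u(0)=0$, $Du(0)=0$, $\nabla H(0)\parallel e_{z_1}$, $c=0$. The computation preceding Definition~\ref{d1}, after taking a trace in $i$, then gives $\triangle_x H(0)+H_{z_1}(0)\,\triangle u^1(0)\le 0$. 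Because $Du(0)=0$, the classical equation \eqref{MS} at $0$ reduces to $\triangle u^\alpha(0)=0$ for every $\alpha$, so $\triangle_x H(0)\le 0$, contradicting $\triangle_L H(0)>0$ applied to $L=\R^n\times\{0\}$.

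For the converse I argue by contradiction: suppose $u\in C^2$ is a viscosity solution which fails \eqref{MS} at some point. After the same normalization, the system at $0$ reads $\triangle u^\alpha(0)=0$, which by hypothesis must fail for some $\alpha$; after a further reflection $u^\alpha\mapsto -u^\alpha$ (also a rigid motion) and relabeling, I may assume $c:=\triangle u^1(0)>0$. Setting $B:=D^2u^1(0)$, I propose the polynomial
\[
H(X):=\tfrac12\langle(B-\delta I)x,x\rangle - z_1 + \tfrac{\mu}{2}|z|^2,
\]
with $\delta\in(0,c/n)$ small and $\mu$ chosen larger than every eigenvalue of $B$. A Taylor expansion using $u(0)=0$, $Du(0)=0$, $D^2u^1(0)=B$ yields $H(x,u(x))=-\tfrac{\delta}{2}|x|^2+O(|x|^3)$, so $H(0)=0$ and $\Gamma\subset\{H\le 0\}$ in a neighborhood of $0$, with contact only at $0$.

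The main obstacle is to verify that this $H$ is actually a comparison function on a small ball around $0$. Since $D^2H$ is the constant block-diagonal matrix $\mathrm{diag}(B-\delta I,\,\mu I_m)$, the Ky Fan min--max principle gives
\[
\min_{\substack{L\perp\nabla H(X)\\ \dim L=n}} \triangle_L H(X) \;=\; \sum_{i=1}^{n}\lambda_i(X),
\]
where $\lambda_1(X)\le\cdots\le\lambda_{n+m-1}(X)$ are the eigenvalues of $D^2H$ restricted to the hyperplane $\nabla H(X)^\perp$. At $X=0$ we have $\nabla H(0)^\perp=\R^n\oplus\mathrm{span}(e_{z_2},\dots,e_{z_m})$, and these eigenvalues are those of $B-\delta I$ together with $\mu$ of multiplicity $m-1$; because $\mu>\lambda_{\max}(B)>\lambda_{\max}(B-\delta I)$, the $n$ smallest are precisely the eigenvalues of $B-\delta I$, summing to $\mathrm{tr}(B-\delta I)=c-n\delta>0$. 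Continuity of these eigenvalues as the hyperplane $\nabla H(X)^\perp$ varies then preserves positivity on a neighborhood of $0$. The subtle point is that $D^2H$ itself has $n$ possibly negative eigenvalues in its $x$-block, so $H$ need not be convex; the large coefficient $\mu$ in the $z$-directions is precisely what forces every admissible $L$ to ``see'' the positive-trace $x$-block via the dimension and normality constraints in Definition~\ref{d1}. Once this verification is in hand, $\Gamma$ touches $\{H=0\}$ from $\{H\le 0\}$ at $0$, contradicting the viscosity property.
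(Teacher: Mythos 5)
Your forward direction (classical $\Rightarrow$ viscosity) is exactly the computation the paper carries out immediately before Definition~1.1, so that part agrees with the source. The paper labels the entire proposition an ``easy consequence'' and does not write out the converse; your contribution is to supply an explicit construction, and it is correct. After normalizing so that $Du(0)=0$, you reduce failure of \eqref{MS} to $\triangle u^1(0)=c>0$, and your polynomial $H(X)=\tfrac12\langle(B-\delta I)x,x\rangle - z_1 + \tfrac{\mu}{2}|z|^2$ with $B=D^2u^1(0)$ does the job: the Taylor expansion $H(x,u(x))=-\tfrac{\delta}{2}|x|^2+o(|x|^2)$ gives one-sided contact at the origin (the $\mu|z|^2/2$ term contributes only $O(|x|^4)$ since $u(x)=O(|x|^2)$), and the Ky Fan variational characterization correctly identifies $\min_L \triangle_L H$ with the sum of the $n$ smallest eigenvalues of $D^2H$ restricted to $\nabla H(X)^\perp$, which at $0$ equals $\mathrm{tr}(B)-n\delta=c-n\delta>0$ once $\mu$ dominates the eigenvalues of $B$; continuity of that eigenvalue sum in $X$ (equivalently in the hyperplane $\nabla H(X)^\perp$) keeps it positive nearby. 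You also correctly observe that the $\mu|z|^2/2$ term is what forces any admissible $n$-plane to pick up the positive-trace $x$-block rather than escape into the flat $z'$-directions.

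Two small cosmetic points, neither a real gap: for $u\in C^2$ the remainder in the Taylor expansion is $o(|x|^2)$ rather than $O(|x|^3)$ (the conclusion $H(x,u(x))<0$ for small $x\ne0$ is unaffected), and the relabeling/reflection step should explicitly note that a coordinate reflection in a $z$-direction is an isometry of $\R^{n+m}$ and that the class of comparison functions is invariant under such isometries, which is what justifies assuming $c>0$ without loss of generality. With those remarks, the argument is complete and is a genuine (and clean) addition to what the paper states, since the paper only sketches the ``classical implies viscosity'' half.
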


\begin{prop}[Stability]
If $u_m$ is a sequence of viscosity solutions of the system \eqref{MS} and $u_m \to u$ uniformly on compact sets then $u$ is a viscosity solution as well.
\end{prop}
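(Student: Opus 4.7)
The plan is a standard viscosity stability argument by contradiction. Suppose $u$ were not a viscosity solution; then there would exist a comparison function $H$ in some open set $U \subset \R^{n+m}$, a level $c$, and a point $X_0 = (x_0, u(x_0)) \in \Gamma \cap U$ at which the graph $\Gamma$ of $u$ touches $\{H = c\}$ from the side $\{H \le c\}$. The goal is to transfer this one-sided contact to the approximating graphs $\Gamma_m$ of $u_m$, producing, for some large $m$, a level set of a genuine comparison function that touches $\Gamma_m$ from below, contradicting the assumption that $u_m$ is a viscosity solution.

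The first step is to make the contact strict by replacing $H$ with
\[
\tilde H(X) := H(X) - \delta |X - X_0|^2, \qquad \delta > 0 \text{ small.}
\]
Then $\tilde H(X_0) = c$, while at any other point $X \in \Gamma \cap U$ one has $\tilde H(X) = H(X) - \delta |X - X_0|^2 \le c - \delta |X - X_0|^2 < c$, so the touching at $X_0$ is strict. The comparison condition $\triangle_L H > 0$ is a strict inequality, continuous in $X$ and in $L$, and $\nabla H(X_0) \neq 0$ at any one-sided touching point (otherwise $\{H = c\}$ is not a hypersurface there). By continuity and compactness of the Grassmannian of $n$-planes, there exist $\mu > 0$ and a radius $r > 0$ such that $\triangle_L H \ge \mu$ on $B_r(X_0)$ for every $L$ in a neighborhood of the $n$-planes normal to $\nabla H$. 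Since $D^2 \tilde H = D^2 H - 2\delta I$ and $\nabla \tilde H - \nabla H = O(\delta r)$, choosing $\delta$ small enough ensures $\tilde H$ remains a comparison function on $B_r(X_0)$.

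In the second step I transfer the contact to $\Gamma_m$. The strict one-sided touching yields $\eta > 0$ such that $\tilde H \le c - \eta$ on $\Gamma \cap \p B_r(X_0)$. Uniform convergence $u_m \to u$ implies Hausdorff convergence of the graphs on $\ov{B_r(X_0)}$, so for $m$ large we have $\tilde H \le c - \eta/2$ on $\Gamma_m \cap \p B_r(X_0)$, while some point of $\Gamma_m$ near $X_0$ gives a value of $\tilde H$ close to $c$. Consequently $c_m := \max_{\Gamma_m \cap \ov{B_r(X_0)}} \tilde H$ is attained at an interior point $X_m^* \in \Gamma_m \cap B_r(X_0)$, with $c_m \to c$ and $X_m^* \to X_0$. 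By construction, $\{\tilde H = c_m\}$ touches $\Gamma_m$ at $X_m^*$ from the side $\{\tilde H \le c_m\}$, contradicting the viscosity solution property of $u_m$.

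The main subtlety is not the compactness transfer step, which is routine once the touching is strict, but verifying that the perturbation preserves the comparison-function property. This rests on the openness of the condition $\triangle_L H > 0$ together with the fact that $\nabla H$ does not vanish near a one-sided touching point, so that the $n$-planes normal to $\nabla \tilde H$ stay uniformly close to those normal to $\nabla H$ for small $\delta$ and $r$.
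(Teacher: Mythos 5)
The paper leaves this as an unproved easy consequence of the definition, and your argument --- perturbing the comparison function to make the one-sided contact strict, then sliding the contact onto $\Gamma_m$ using uniform convergence and taking the interior maximum of $\tilde H$ on $\Gamma_m\cap\overline{B_r(X_0)}$ --- is the natural and correct route; the paper offers no alternative to compare against.

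The one imprecision is your parenthetical claim that $\nabla H(X_0)\neq 0$ at a one-sided touching point ``because otherwise $\{H=c\}$ is not a hypersurface there.'' Definition~1.1 does not require the level sets of a comparison function to be regular hypersurfaces, so this does not rule out a critical point on the level set. Fortunately the claim is not actually needed. If $\nabla H(X_0)=0$, then by Definition~1.1 the inequality $\triangle_L H(X_0)>0$ must hold for \emph{every} $n$-plane $L$ (every plane is normal to the zero vector); by continuity this uniform positivity persists for all $L$ on a small ball $B_r(X_0)$, and then $\triangle_L\tilde H=\triangle_L H-2\delta n>0$ for all $L$ once $\delta$ is small, with no need to track how the normals of $\tilde H$ relate to those of $H$. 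If $\nabla H(X_0)\neq 0$, your compactness-on-the-Grassmannian argument applies as written. So the perturbed $\tilde H$ is a comparison function on $B_r(X_0)$ in either case, and the rest of the proof is sound.
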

 The notion of viscosity solutions can be extended to $n$ dimensional compact sets $\Gamma$ in $\R^{n+m}$ instead of just graphs. In this setting, the viscosity solution definition is weaker than the one provided by the theory of varifolds. For example if $T$ is a stationary varifold then its support cannot be tangent to $\{H=c\}$ from the side $\{ H \le c\}$, with $H$ a comparison function. This follows from the fact that the projection onto the level surface $\{ H=c -\eps\}$ from the side $\{ H \ge c -\eps\}$ decreases the $n$ dimensional volume.  

Our main result is an $\eps$ regularity theorem for viscosity solutions. We show that if $u$ is sufficiently close (in $L^\infty$) to a linear map $l$ then it must be smooth in the interior.

\begin{thm}\label{T1}
Assume $u$ is a viscosity solution such that
$$|u-l| \le \eps \quad \mbox{in} \quad B_1,$$
where $l(x)=b+ Ax$ is a linear function from $\R^n$ to $\R^m$.

If $\eps \le \eps_0$ small, then $u \in C^2(B_{1/2})$ and
$$  \|D^2u\|_{C^{\alpha}(B_{1/2})} \le C \eps.$$
Here $\eps_0$ and $C$ depend only on $n$, $m$ and $|A|$.
\end{thm}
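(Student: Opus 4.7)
The plan is to adapt the improvement-of-flatness / compactness scheme (De Giorgi, Savin) to the viscosity framework of Definition 1.2. First, using the rigid-motion invariance of \eqref{MS}, which is built into the notion of comparison function, I would rotate $\R^{n+m}$ so that the affine plane $\{z = Ax + b\}$ becomes $\{z = 0\}$. For $\eps_0$ small depending on $|A|$, the rotated $n$-submanifold is still a graph over $B_{3/4}$ of some viscosity solution $\tilde u$ with $|\tilde u| \le C(|A|)\eps$. Thus it suffices to prove everything when $l \equiv 0$.

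Next I would establish a Harnack / Hölder estimate for each component $u^\alpha$. The key observation is that hypersurfaces $H(x,z) = z_\alpha - \psi(x)$ with $\psi$ strictly superharmonic are comparison functions: since $\nabla H$ is close to the $z_\alpha$-axis, any $n$-plane $L$ normal to $\nabla H$ is a small tilt of the $x$-plane, and a direct computation gives $\triangle_L H = -\triangle \psi + O(|D\psi|^2 |D^2\psi|) > 0$. Sliding such barriers shows that $u^\alpha$ behaves like a viscosity sub/supersolution of $\triangle v = 0$ up to an $O(\eps)$ error, so Krylov--Safonov yields $\|u^\alpha\|_{C^\alpha(B_{3/4})} \le C\eps$.

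The heart of the argument is an improvement-of-flatness lemma: there exist $\rho \in (0,1/4)$, $\alpha \in (0,1)$ and $\eps_1>0$ such that whenever $|u| \le \eps \le \eps_1$ in $B_1$, one can find a linear map $l'$ with $|Dl'| \le C\eps$ and $|u - l'| \le \rho^{1+\alpha}\eps/2$ in $B_\rho$. The proof is by contradiction: take $\eps_k \to 0$ and $u_k$ for which no such $l'$ exists; set $w_k = u_k/\eps_k$; by the Hölder estimate, $w_k \to w$ uniformly on $B_{3/4}$. Passing the viscosity property to the limit via comparison functions $H = z_\alpha - \psi$ perturbed in the tilt of $\nabla H$, one shows each component $w^\alpha$ is a classical harmonic function. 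Then its linear part at $0$ furnishes the forbidden $l'$ for large $k$, a contradiction.

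Iterating the lemma at every $y \in B_{1/2}$ and every dyadic scale gives $u \in C^{1,\alpha}(B_{1/2})$ with $[Du]_{C^\alpha} \le C\eps$. With $Du$ Hölder continuous and small, \eqref{MS} becomes a uniformly elliptic linear system for $u$ with $C^\alpha$ coefficients $F_{\alpha i, \beta j}(Du)$, so Schauder theory upgrades $u$ to $C^{2,\alpha}(B_{1/2})$ with $\|D^2 u\|_{C^\alpha} \le C\eps$; Proposition 1.3 then identifies this classical solution with the original viscosity one. The main obstacle is the compactness step: one must construct, for each touching test function $\psi$ for $w^\alpha$, a family of perturbed comparison functions $H_k$ adapted to the tilted graphs of $u_k$, and verify $\triangle_L H_k > 0$ uniformly over all $n$-planes $L$ normal to $\nabla H_k$, since this is precisely what forces harmonicity of each limiting component.
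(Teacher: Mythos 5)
Your high-level outline (reduce to $l \equiv 0$, prove a Harnack-type estimate, use compactness to pass to a harmonic limit, obtain improvement of flatness, iterate) does match the paper's strategy. But there is a fatal gap at the very heart of the argument: the comparison functions you propose do not satisfy Definition~\ref{d1}. You claim that $H(x,z)=z_\alpha-\psi(x)$ with $\psi$ strictly superharmonic is a comparison function, on the grounds that \emph{any $n$-plane $L$ normal to $\nabla H$ is a small tilt of the $x$-plane}. This is true when $m=1$, but false for $m\ge 2$: the requirement $L\perp\nabla H(X)$ is a single linear constraint, while $L$ ranges over $n$-planes in the $(n+m-1)$-dimensional space $\nabla H(X)^\perp$, so $L$ is far from being forced to be a tilt of the $x$-plane. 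In particular, $L$ may contain any of the directions $e_{z_\beta}$, $\beta\ne\alpha$, along which $D^2H$ vanishes. Concretely, take $n=m=2$, $\alpha=1$, $\psi(x)=x_1^2-2x_2^2$ (strictly superharmonic); then $L=\operatorname{span}(e_{x_1},e_{z_2})$ is orthogonal to $\nabla H(0)=e_{z_1}$, yet $\triangle_L H(0)=-\psi_{x_1x_1}=-2<0$. A purely $x$-dependent barrier $H=z_\alpha-\psi(x)$ is a comparison function only if $\psi$ is strictly concave, which is useless for a Harnack inequality. So the Hölder/Harnack step, and with it the compactness step, does not go through as written.

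This is exactly the obstruction the paper is designed to overcome, and its resolution is the key idea you are missing. The paper does not argue component-wise; it studies the \emph{scalar} quantity $w=|u-l|/\eps$, and the corresponding comparison function $H(X)=|z-l(x)|-\eps\varphi(x)$ gains the missing positivity from the convexity of $|z-l(x)|$. After the decomposition $H\ge G_1+G_2$ with $G_2=\tfrac{1}{2\eps}|z'-l'(x)|^2$, one has $\partial^2_{\xi\xi}G_2\gtrsim 1/\eps$ for every direction $\xi$ not lying in a thin cone around the $x$-subspace; this term dominates the bounded contribution of $\varphi$ whenever $L\not\subset\mathcal C_{\pi/2-\delta}$, while for $L$ close to the $x$-plane the Pucci condition on $\varphi$ applies directly (Lemma~\ref{l1}). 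The same $|f(X)|^2$-device reappears in the compactness step (Lemma~\ref{l35}). Separately, your final appeal to Schauder theory to pass from $C^{1,\alpha}$ to $C^{2,\alpha}$ silently assumes that a $C^{1,\alpha}$ viscosity solution is a weak solution of \eqref{D}, which is not immediate from the definitions; the paper instead performs a second, quadratic improvement-of-flatness iteration with harmonic quadratic polynomials, again using the vector-norm comparison functions.
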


Analiticity of solutions follows then from the linear theory of elliptic systems. 

Theorem \ref{T1} can be viewed as a version of Allard regularity theorem (see \cite{A}). The method of proof is however different and it is based on nonvariational methods and the weak Harnack inequality. 

The estimate in Theorem \ref{T1} does not seem to follow from Allard's theorem since the $L^\infty $ closeness of the graph to a linear map does not give a bound on the density of the graph in $B_1$. On the other hand, the smallness of the excess of a stationary varifold implies its $L^\infty$-closeness to a linear subspace (see for example \cite{SS}).
 In  Proposition \ref{p2} we give a variant of Theorem \ref{T1} which applies for compact sets (not necessarily graphs) that have density strictly less than 2. 
  
The notion of viscosity solution can be easily extended to allow a right hand side $f^\alpha(Du,u,x)$ in the minimal surface system \eqref{MS}. In this case we need to modify the family of comparison functions $H$ and replace in Definition \ref{d1} the right hand side $0$ by a corresponding function depending on $X$ and $L$.  

Finally we remark that in Theorem \ref{T1} we may allow $u$ to be only continuous $\mathcal H^n$ a.e. provided that the viscosity solution notion is understood to hold for the closure of the graph of $u$. It would be interesting to obtain the solvability of the general Dirichlet problem in a ball in the class of viscosity solutions which are continuous $\mathcal H^n$ a.e.

\section{Proof of Theorem \ref{T1}}

We introduce some notation. We denote points in $\R^{n+m}$ by $X=(x,z)$ and by $B_r^n$, $B^m_r$ and $B_r$ the balls of radius $r$ in $\R^n$, $\R^m$ and $R^{n+m}$ respectively. 

We denote by $\mathcal C_\gamma$ the cone of angle $\gamma \in [0, \pi/2)$ around the $x$-subspace
\be\label{C}
\mathcal C_\gamma:= \{X=(x,z) \quad | \quad |z| \le \tan \gamma \, \,  |x|.\}
\ee

\begin{lem}\label{l1}
Assume $u$ satisfies the hypotheses of Theorem \ref{T1}. Then the function
$$w:=\frac 1 \eps |u-l|  $$ 
cannot be touched by above by a function $\varphi$ such that
\be \label{001}
\mathcal M ^+_{c_0,1} (D^2 \varphi) < 0, \quad \| \varphi\|_{C^{1,1}} \le 1,
\ee
with $c_0$ depending on $n$ and $|A|$, and $\mathcal M^+_{\lambda,\Lambda}$ denotes the maximal Pucci operator.
\end{lem}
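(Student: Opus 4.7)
The plan is to argue by contradiction: suppose $\varphi$ touches $w:=|u-l|/\eps$ from above at a point $x_0$. The degenerate case $\varphi(x_0)=0$ is immediate, since then $\varphi\ge 0$ attains its minimum at $x_0$, forcing $D^2\varphi(x_0)\ge 0$ and hence $\mathcal M^+_{c_0,1}(D^2\varphi(x_0))\ge 0$, contradicting \eqref{001}. Otherwise $v_0:=u(x_0)-l(x_0)\ne 0$, and the goal is to construct a comparison function $H$ on $\R^{n+m}$ so that the graph $\Gamma$ touches the level set $\{H=0\}$ from $\{H\le 0\}$ at $X_0=(x_0,u(x_0))$, contradicting the viscosity property.

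The natural candidate is $H(x,z)=|z-l(x)|^2-\eps^2\varphi(x)^2$: the bound $|u-l|\le\eps\varphi$ gives $H(x,u(x))\le 0$ near $x_0$ with equality at $x_0$, so $\Gamma$ touches $\{H=0\}$ from $\{H\le 0\}$ there. It remains to verify that $H$ is a comparison function in a small neighborhood of $X_0$, i.e.\ $\triangle_L H>0$ for every $n$-plane $L\perp\nabla H$. A direct computation gives $D^2H=2M_0-2\eps^2\Psi$, where
\[
M_0=\begin{pmatrix}A^TA & -A^T \\ -A & I_m\end{pmatrix}
\]
is positive semidefinite with $n$-dimensional kernel $T=\{(v,Av):v\in\R^n\}$ (the tangent plane to the graph of $l$) and satisfies $M_0\ge I$ on $T^\perp$, while $\Psi$ has $x$-$x$ block $\varphi D^2\varphi+\nabla\varphi\otimes\nabla\varphi$ and vanishing $z$-blocks. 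Thus for small $\eps$, $D^2H$ has $n$ ``soft'' eigenvalues of order $\eps^2$ (on a subspace close to $T$) and $m$ ``hard'' eigenvalues bounded below by $\gtrsim 1$. At a nearby point, $\nabla H=2\eps(-A^Ts,s)+O(\eps^2)$ with $s=(z-l(x))/\eps$ lies mostly in a hard direction of $T^\perp$; passing to $\nabla H^\perp$ removes one hard direction and leaves the $n$ soft ones essentially intact, so the minimum of $\mathrm{tr}(D^2H|_L)$ over $n$-planes $L\subset\nabla H^\perp$ equals the sum of those soft eigenvalues.

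The heart of the proof is computing this sum. Rotating coordinates so that $A$ is in singular-value form and using the orthonormal bases $\tau_i=(e^x_i,\sigma_i e^z_i)/\sqrt{1+\sigma_i^2}$ of $T$ and $\eta_\alpha=(-\sigma_\alpha e^x_\alpha,e^z_\alpha)/\sqrt{1+\sigma_\alpha^2}$ of $T^\perp$ (with $\nabla H$ close to $\eta_1$ at the contact point), I parametrize the soft $n$-plane in $\nabla H^\perp$ by $a\in\R^n$ via $\xi=\sum_i a_i\tau_i+b_1(a)\eta_1$, where $b_1(a)=O(\eps)$ is forced by $\xi\cdot\nabla H=0$. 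A short calculation shows that the $\nabla\varphi\otimes\nabla\varphi$-contribution to $\xi^TD^2H\xi$ is cancelled exactly by the $b_1^2\,\eta_1^TD^2H\eta_1$-term coming from the constraint, and what remains is $-2\eps^2\varphi\,\mathrm{tr}(PD^2\varphi)+O(\eps^3)$ with $P=(I+A^TA)^{-1}$. Since the eigenvalues of $P$ lie in $[1/(1+|A|^2),1]$, the Pucci inequality gives $\mathrm{tr}(PD^2\varphi)\le\mathcal M^+_{c_0,1}(D^2\varphi)<0$ provided $c_0\le 1/(1+|A|^2)$. Shrinking the neighborhood so that $|z-l(x)|$ stays bounded below (keeping $\nabla H\ne 0$) and the $O(\eps^3)$ error is dominated by the leading $O(\eps^2)$ term yields $\triangle_L H>0$ throughout, giving the desired contradiction. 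The main obstacle is precisely this cancellation: without it, the $|\nabla\varphi|^2$ term is of order $1$ and would overwhelm the main term when $\varphi(x_0)$ is small, so no $c_0$ depending only on $n$ and $|A|$ could close the argument.
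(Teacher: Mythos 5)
Your overall strategy is close in spirit to the paper's, but the specific candidate is different and this leads to a real gap. The paper uses $H(X)=|z-l(x)|-\eps\varphi(x)$ and bounds it from below at the contact point $X_0$ by $G=G_1+G_2$, where $G_1(X)=z_1-l_1(x)-\eps\varphi(x)$ is essentially affine in the $(x,z_1)$ variables and $G_2(X)=\tfrac{1}{2\eps}|z'-l'(x)|^2$ is convex with Hessian of size $1/\eps$ in the $z'$-directions; the case analysis (either $L$ lies in a cone around the $x$-plane, where $G_1$ dominates via the Pucci hypothesis, or $L$ contains a direction near $z'$, where $G_2$ dominates) then closes the argument with no eigenvalue bookkeeping. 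You instead take $H=|z-l|^2-\eps^2\varphi^2$ and analyze the spectrum of $D^2H=2M_0-2\eps^2\Psi$ compressed to $\nabla H^\perp$. This is a legitimate alternative route, and your identification of $M_0$, its kernel $T$, and the Pucci step via $P=(I+A^TA)^{-1}$ are correct.

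The gap is the asserted \emph{exact} cancellation between the $\nabla\varphi\otimes\nabla\varphi$ contribution and the $b_1^2\,\eta_1^T D^2H\,\eta_1$ term. Put $A$ in singular value form, let $v_0=z_0-l(x_0)$, $\hat v_0=v_0/|v_0|$, and $B:=AA^T+I$. To leading order $\nabla H$ is parallel to $\hat\nu:=(-A^T\hat v_0,\hat v_0)/\kappa$ with $\kappa^2=\hat v_0^TB\hat v_0$, and a direct computation gives $\hat\nu^TM_0\hat\nu/\kappa^2=|B\hat v_0|^2/(\hat v_0^TB\hat v_0)^2$. The constrained term equals the $\nabla\varphi\otimes\nabla\varphi$ term precisely when this ratio is $1$, i.e.\ when $\hat v_0$ is an eigenvector of $AA^T$ (by Cauchy--Schwarz). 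Your parenthetical ``with $\nabla H$ close to $\eta_1$'' is exactly this hidden assumption, and it is not free: the singular value normalization has already used up the rotation freedom in the $z$-variable, so $\hat v_0$ can be an arbitrary unit vector in $\R^m$. Fortuitously, Cauchy--Schwarz gives the inequality in the favorable direction: the leftover $2\eps^2 S^2\bigl(\hat\nu^TM_0\hat\nu/\kappa^2-1\bigr)$ is nonnegative, so your lower bound on $\triangle_L H$ survives as an inequality rather than an identity. You should state it that way; as written (``cancelled exactly'') the step is false for generic $A$. A second, smaller point: the favorable sign of this leftover uses $|v_0|=\eps\varphi(x_0)$, which holds at the contact point but not at nearby points with $H\neq 0$; since Definition~1.1 requires $\triangle_LH>0$ throughout an open neighborhood, you need an extra word (e.g.\ a slight modification of $\varphi$, or absorbing the deficit into the $\eps$-smallness) to finish. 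The paper's $G_1+G_2$ decomposition sidesteps both of these issues cleanly, which is one concrete advantage of that route.
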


We recall the definition of $\mathcal M^+_{\lambda,\Lambda}(N)$ for a symmetric matrix $N$: 
$$ \mathcal M^+_{\lambda,\Lambda}(N) = \Lambda |N^+|-\lambda |N^-|.$$
The lemma above states that $w$ is a subsolution to a linear uniformly elliptic equation $$a^{ij}w_{ij} \ge 0$$ at the points where $|\nabla w|$, $|D^2 w|$ are bounded by $1$. This means that the following version of weak Harnack inequality holds (see \cite{S1} or Section 6 in \cite{S2}):

{\bf Weak Harnack inequality:}

 {\it Assume that $w \le 1$ in $B^n_1$ and it cannot be touched by above by functions $\varphi$ that satisfy property $\eqref{001}$. If $$|\{ w< 1-\eta\}| \ge \mu |B^n_1|,$$ for some $\mu$ small, then $$w \le 1-c(\mu) \eta \quad \mbox{in} \quad B^n_{1/2},$$ for some $c(\mu)>0$ small depending on $\mu$, $c_0$, $n$. }

Here we require that $0<\eta  \le \eta_0$ with $\eta_0(\mu)$ sufficiently small, and $|\cdot|$ represents the $n$-dimensional Lebesgue measure.

\

{\it Proof of Lemma \ref{l1}.}
It suffices to show that $$H(X):= |z-l(x)|- \eps \varphi(x), \quad \quad X=(x,z), \quad z \in \R^m,$$
is a comparison function in the cylinder $|z-l(x)| \le \eps$. We do the computation at a point $X_0=(x_0,z_0)$ and, after a translation and then a rotation of the $z$ coordinate, we may assume that $x_0=0$, $l(x_0)=0$, $X_0$ lies on the positive $z_1$-axis. We use
the convexity of $|\cdot|$ and find that near $X_0$
$$|z-l(x)| \ge z_1-l_1(x) + \frac {1}{2 \eps}|z'-l'(x)|^2  $$
which gives
$$H(X) \ge G(X):=G_1(X) + G_2(X)$$
with
$$G_1(X)=z_1-l_1(x) - \eps \varphi(x), \quad \quad G_2(X)= \frac {1}{2 \eps}|z'-l'(x)|^2.$$ 
The function $G$ touches by below $H$ at $X_0$ and it suffices to show that $\triangle _L G(X_0)>0$ for any $n$-dimensional linear subspace $L$ orthogonal to $\nabla G(X_0)$. 

Notice that $G_2$ is convex and at $X_0$ we have 
$$G_2=0, \quad \nabla G_2=0, \quad \quad  \partial^2 _{\xi \xi} G_2 \ge \frac{1}{4 \eps},$$ for any unit direction $\xi$ near the $z'$ subspace. In particular the inequality holds for any unit direction $\xi \perp \nabla H(X_0)$ and (see \eqref{C}) $\xi \notin \mathcal C_{\pi/2-\delta}$ with $\delta$ small depending only on $|Dl|$.

 On the other hand $G_1$ depends only on the $(x,z_1)$ variables and $|D^2G_1| \le \eps$ by the second hypothesis in \eqref{001}. Hence if $L$ is not included in $\mathcal C_{\pi/2-\delta}$ then $$\triangle_L G \ge \triangle _L G_1 + \triangle _L G_2 \ge - n \eps + \frac{1}{4 \eps} >0.$$
Otherwise, the projection $\pi_x$ of $L$ onto the $x$-subspace is a linear map with bounded inverse by a constant depending on $\delta$ thus
$$\triangle_L G \ge \triangle_L G_1 = -\eps \, \, tr (\pi_x^TD^2 \varphi   \, \, \pi_x) \ge - n \eps  \mathcal M^+_{c_0,1}(D^2 \varphi) >0.   $$

\qed

\

We define the oscillation of a function $w: B^n_1 \to \R^m$ as the smallest radius $\rho$ for which the image of $w$ is included in a ball of radius $\rho$,
$$ osc_{B_1}  w=\inf \left \{ \rho\quad | \quad w(B^n_1) \subset B^m_{\rho}(z) \quad \mbox{for some $z$}\right \}.$$

\begin{lem}[Harnack inequality]\label{l2}
Assume $u$ is a viscosity solution of \eqref{MS} and $$osc_{B_1}(u-l) \le \eps.$$ Then 
$$osc_{B_1}(u-l) \le (1-\theta)\eps,$$
for some $\theta >0$ universal.
\end{lem}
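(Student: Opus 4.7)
The plan is to prove the oscillation improvement (which I read as the conclusion really being $osc_{B_{1/2}}(u-l)\le (1-\theta)\eps$ in a concentric ball of half the radius) by combining the Pucci subsolution property of $|u-l|/\eps$ from Lemma \ref{l1} with the weak Harnack inequality stated above, applied to suitably shifted distance functions.

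First I normalize. The oscillation hypothesis gives $z_0\in \R^m$ with $|u-l-z_0|\le \eps$ in $B_1^n$; replacing $l$ by the linear function $l+z_0$ (same gradient $A$, hence same constant $c_0$ in Lemma \ref{l1}), I may assume $|u-l|\le \eps$ in $B_1^n$. The goal becomes: find $z^*\in \R^m$ with $|u-l-z^*|\le (1-\theta)\eps$ in $B^n_{1/2}$.

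I split into two cases. Case A: a definite fraction of $B_1^n$ has $|u-l|\le (1-\eta)\eps$ for some small universal $\eta>0$. Then the weak Harnack inequality applied to $w=|u-l|/\eps$ directly yields $w\le 1-\theta$ in $B^n_{1/2}$, closing the proof with $z^*=0$. Case B: the image $(u-l)(B_1^n)$ concentrates in the spherical shell $\{(1-\eta)\eps<|v|\le \eps\}$. Here I consider finitely many shifted distance functions $\tilde w_j(x)=|u(x)-l(x)-\eps e_j|/(2\eps)$ for $e_j$ ranging over a $\delta$-net of $S^{m-1}$; each $\tilde w_j\le 1$ is a Pucci subsolution by Lemma \ref{l1} applied with $l+\eps e_j$ in place of $l$ (the gradient is still $A$). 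A simple direction-counting observation shows that at every $x$, $\tilde w_j(x)$ is bounded strictly away from $1$ for all but a small angular range of $j$'s (those with $-e_j$ near $(u-l)(x)/|(u-l)(x)|$); by pigeonhole there is $e_{j^*}$ with $|\{\tilde w_{j^*}<1-\eta\}|\ge \mu|B_1^n|$ for a universal $\mu>0$. The weak Harnack then yields $|u-l-\eps e_{j^*}|\le (1-c_1)(2\eps)$ in $B^n_{1/2}$, and intersecting with the original bound $|u-l|\le \eps$ restricts $(u-l)(B^n_{1/2})$ to a lens $B^m_\eps(0)\cap B^m_{(1-c_1)(2\eps)}(\eps e_{j^*})$; I take $z^*$ to be the center of the minimum enclosing ball of this lens.

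The main technical obstacle is the final geometric step in dimension $m\ge 2$: a single cap cut of $B^m_\eps(0)$ by the constraint from one direction does not in general reduce the minimum enclosing ball radius below $\eps$, since the equatorial directions orthogonal to $e_{j^*}$ are unaffected. To resolve this one must either iterate the weak Harnack along several independent directions (cutting enough caps to genuinely shrink the enclosing ball), or strengthen the pigeonhole to obtain the measure estimate for all $e_j$ in the net simultaneously; tracking the universal constants $\mu$ and $c$ through either route, and keeping them dependent only on $n$, $m$, $|A|$, is the delicate part of the argument.
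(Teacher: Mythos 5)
Your proposal correctly sets up the objects (shifted distance functions as Pucci subsolutions via Lemma \ref{l1}, weak Harnack as the engine), but you have honestly identified the point where it breaks: in codimension $m\ge 2$ a \emph{single} cap cut $B^m_1\cap B^m_{2(1-c_1)}(e_{j^*})$ does not reduce the radius of the minimum enclosing ball below $1$, since the equatorial directions are untouched, and you leave that unresolved. The paper closes exactly this gap, and it does so by running the weak Harnack inequality in the \emph{opposite} direction from you. You apply it in the forward direction (``measure condition on $B^n_1$ implies pointwise bound on $B^n_{1/2}$'') and therefore need the measure estimate for many directions at once, which is why you end up with the net, the pigeonhole, and the unfinished bookkeeping. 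The paper instead uses its contrapositive: if $\tilde u(x_0)$ lies within $\eta$ of a boundary direction $\xi\in\partial B^m_1$ for just one point $x_0\in B^n_{1/2}$, then $w:=\tfrac12|\tilde u+\xi|$ satisfies $w\le 1$, $w(x_0)\ge 1-\eta/2$, and Lemma \ref{l1} applies; weak Harnack then forces $w>1-C\eta$ on a $(1-\mu)$-fraction of all of $B^n_1$, i.e.\ $\tilde u\in B^m_{C\eta^{1/2}}(\xi)$ on that fraction. Running the same argument with a second direction $\zeta$ satisfying $|\zeta-\xi|>2C\eta^{1/2}$ would produce two disjoint sets, each of measure more than $(1-\mu)|B^n_1|$, a contradiction for $\mu<1/2$. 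Hence $\tilde u(B^n_{1/2})$ can approach $\partial B^m_1$ only inside a cluster of angular width $O(\eta^{1/2})$ around a single $\xi$; since it otherwise stays in $B^m_{1-\eta}$, the image is contained in $B^m_{1-\eta/2}\bigl(\tfrac{\eta}{2}\xi\bigr)$, giving the oscillation decay in one step.

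To repair your Case B along your own lines, the needed strengthening is precisely the fact just stated: a single boundary touch inside $B^n_{1/2}$ already pins down the direction up to $O(\eta^{1/2})$, so the measure estimate for $\tilde w_j$ automatically holds for \emph{every} $e_j$ outside that small cluster, and then cutting all of those caps at once does shrink the enclosing ball. Without that propagation step, the ``pigeonhole for one direction'' route cannot succeed, since one cap is simply not enough in $m\ge 2$, as you observed. So the idea missing from your argument is the contrapositive use of weak Harnack to turn a single boundary touch at $x_0\in B^n_{1/2}$ into a density statement on $B^n_1$.

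Two smaller remarks: the two antipodal points $\pm\xi$ mentioned in the paper are only the tip of this argument --- one really needs to exclude all $\zeta$ far from $\xi$, not just $-\xi$, which is what the above does; and, as you noted, the statement of Lemma \ref{l2} should read $osc_{B_{1/2}}(u-l)\le(1-\theta)\eps$ on the smaller ball, consistent with the proof.
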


\begin{proof}
 We look at the image of the function $$\tilde u:=(u-l)/\eps$$ that maps $B^n_1$ into, say $B^m_1$. We claim that when we restrict $x$ to $B^n_{1/2}$ then the image can be included in a ball of radius $1-\theta$ in $\R^m$. 

Assume that for some $x_0$ in $B^n_{1/2}$, $\tilde u(x_0)$ is $\eta$ close to a point $ \xi \in \p B^m_1$, i.e.
$$\tilde u (x_0) = t \xi, \quad t \in [1-\eta,1].$$
Now we apply Lemma \ref{l1} to the function
$$  w:=\frac 12 |\tilde u + \xi|= \frac{1}{2 \eps}|u-(l- \eps \xi)|,$$
and obtain that $w$ is a subsolution in the sense of Lemma \ref{l1} and
$$|w| \le 1 , \quad \quad w(x_0) \ge 1-\eta.$$ Then by Weak Harnack Inequality for $w$ we find that
\be\label{01}
|\{w>1- C \eta \}| \ge (1-\mu) |B^n_1|,
\ee
for some $C(\mu)$ depending on $\mu$ (we choose $\mu = 1/4$ for example), provided that $\eta$ is chosen sufficiently small. 

Since $$\{ w>1-C \eta\} \subset \{ |\tilde u - \xi| \le C \eta^{1/2}  \},$$
we obtain that $\tilde u$ maps more than $1-\mu$ of the measure of $B^n_1$ in a ball of radius $C \eta^{1/2}$ centered at $\xi$. 

The same argument shows that $\tilde u(B^n_{1/2})$ cannot intersect also $B_\eta (-\xi)$ and the conclusion easily follows.

\end{proof}

The hypothesis that $\Gamma$ is a graph was used only in the last part in the proof above when we said that $\tilde u$ cannot map most of the measure in $B_1^n$ close to $\xi$ and also close to $-\xi$. The graph hypothesis on $\Gamma$ can be replaced by the the following bound on its $\mathcal H^n$-mass
\be\label{02}
\mathcal H^n (\Gamma \cap B_1) \le (2-\delta) \mathcal H^n (B_1^n).
\ee

We state the version of Lemma \ref{l2} in this setting.

\begin{lem}\label{l3}
Assume $\Gamma$ is a viscosity solution (not necessarily a graph) to the minimal surface system and
\be\label{03}
\Gamma \cap B_1 \quad \subset \quad B^n_1 \times B^m_\eps,
\ee  and that \eqref{02} holds. If $\eps \le \eps_0,$ then
\be\label{04}
\Gamma \cap B_{1/2} \quad \subset \quad B^n_1 \times B^m_{(1-\theta)\eps}(z_0),
\ee
with $\theta$ and $\eps_0$ depending on $n$ and $\delta$.
\end{lem}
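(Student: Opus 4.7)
The plan is to follow the proof of Lemma \ref{l2} closely, using the mass bound \eqref{02} as a substitute for the graph hypothesis at the single step where the latter was invoked.

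For each unit $\xi \in \partial B^m_1$, introduce the upper semicontinuous function
$$w_\xi(x) := \sup\bigl\{\tfrac{1}{2\eps}|z - \eps\xi| : (x,z) \in \Gamma \cap B_1\bigr\}, \quad x \in B^n_1,$$
extended by $0$ on fibers over which $\Gamma$ is empty. Since $|z| \le \eps$ on $\Gamma \cap B_1$, $w_\xi \in [0,1]$. Arguing exactly as in Lemma \ref{l1} with the constant affine map $l \equiv \eps\xi$: if a test function $\varphi$ satisfying \eqref{001} touches $w_\xi$ from above at $x_0$ with $w_\xi(x_0) > 0$ (where the sup is attained), then the comparison function $H(X) = |z - \eps\xi| - 2\eps\varphi(x)$ would have a level set tangent to $\Gamma$ from the excluded side, contradicting viscosity; if instead $w_\xi(x_0) = 0$, then $\varphi$ has a local minimum there, contradicting $\mathcal M^+_{c_0,1}(D^2\varphi) < 0$. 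Thus $w_\xi$ is a subsolution on $B^n_1$ and the weak Harnack inequality applies.

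Suppose toward a contradiction that $K := \pi_z(\Gamma \cap B_{1/2})$ is not contained in any $B^m_{(1-\theta)\eps}(z_0)$. Let $z_0$ be the Chebyshev centre of $K$ and $r > (1-\theta)\eps$ its circumradius. By Carath\'eodory applied to the Chebyshev-centre characterization, there exist unit vectors $\xi_\pm$ with $|\xi_+ - \xi_-| \ge c(m) > 0$ such that $P_\pm := z_0 + r\xi_\pm$ are extreme vertices of $K$ on $\partial B^m_r(z_0)$. Reducing (after a preliminary centring step) to the case $z_0 = 0$, so that $P_\pm = r\xi_\pm$, the corresponding points $X_\pm = (x_\pm, P_\pm) \in \Gamma \cap B_{1/2}$ satisfy
$$w_{-\xi_\pm}(x_\pm) = \frac{r + \eps}{2\eps} \ge 1 - \frac{\theta}{2}.$$

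Fix $\mu < \delta/2$. Taking $\theta$ small depending on $\mu$ and $m$, the weak Harnack inequality applied to each $w_{-\xi_\pm}$ at $x_\pm$ yields
$$\bigl|\{x \in B^n_1 : w_{-\xi_\pm}(x) \ge 1 - C\eta\}\bigr| \ge (1-\mu)|B^n_1|,$$
for some $\eta = \eta(\theta, \mu) \to 0$. An elementary computation using $|z| \le \eps$ converts $w_{-\xi_\pm}(x) \ge 1 - C\eta$ into $|z - \eps\xi_\pm| \le C\sqrt\eta\, \eps$ for the witnessing $z$. Accordingly, the two ``sheets''
$$S_\pm := \{(x,z) \in \Gamma \cap B_1 : |z - \eps\xi_\pm| \le C\sqrt\eta\, \eps\}$$
are disjoint once $C\sqrt\eta \ll c(m)$, and since $\pi_x$ is $1$-Lipschitz,
$$\mathcal H^n(S_\pm) \ge \mathcal H^n(\pi_x(S_\pm)) \ge (1-\mu)|B^n_1|.$$
Summing, $\mathcal H^n(\Gamma \cap B_1) \ge 2(1-\mu)|B^n_1| > (2-\delta)|B^n_1|$, contradicting \eqref{02}. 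The hardest step is the geometric reduction to $z_0 = 0$ combined with the Carath\'eodory-based extraction of $P_\pm$: for $m \ge 2$ the Chebyshev centre of $K$ is not guaranteed to admit an exactly antipodal pair of circumscribed vertices, so one must track the $m$-dependent angular separation $c(m)$ and verify that it is preserved through the centring translation, which at worst degrades the quantitative dependence of $\theta$ on $\mu$.
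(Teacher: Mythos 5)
The paper leaves Lemma~\ref{l3} unproved, saying only that the graph hypothesis in Lemma~\ref{l2} is used in one place (to forbid $\tilde u$ from being close to both $\xi$ and $-\xi$ on most of $B^n_1$) and that the mass bound \eqref{02} can be substituted there. Your proof fills in that gap along the same lines: you promote $w$ to a fiberwise supremum so it is still a subsolution (the USC-touching argument is fine, including the degenerate case $w_\xi(x_0)=0$), run the weak Harnack from two ``almost extremal'' directions, and derive two disjoint sheets whose combined $\mathcal H^n$-mass exceeds $(2-\delta)|B^n_1|$. This is the intended replacement argument, so your proof is essentially the same strategy with the details written out. Two remarks on the details, one cosmetic and one substantive.

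Cosmetic: the Carath\'eodory step is superfluous and the dependence $c(m)$ is spurious. The Chebyshev center $z_0$ lies in $\mathrm{conv}\bigl(K\cap\partial B_r(z_0)\bigr)$, hence the directions $(P-z_0)/r$ for $P\in K\cap\partial B_r(z_0)$ cannot all lie in an open half-sphere, so already two of them satisfy $\xi_+\cdot\xi_-\le 0$, giving $|\xi_+-\xi_-|\ge\sqrt 2$ independently of $m$.

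Substantive: the ``preliminary centring step'' is the weak link, and you underestimate what it costs. The Chebyshev center $z_0$ of $K\subset B^m_\eps(0)$ satisfies only $|z_0|\le\sqrt{\eps^2-r^2}\le\sqrt{2\theta}\,\eps$ (not $O(\theta\eps)$): there is always a contact point $P$ with $(P-z_0)\cdot z_0\ge 0$, and then $|z_0|^2+r^2\le|P|^2\le\eps^2$. After translating by $z_0$ the fibers of $\Gamma$ no longer lie in $B^m_\eps(0)$ but in $B^m_\eps(-z_0)$, so your $w_\xi$ now satisfies only $w_\xi\le 1+O(\sqrt\theta)$, which breaks the hypothesis $w\le 1$ of the weak Harnack as stated. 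The cleanest fix is to drop the centring entirely: in the original coordinates one has
$$w_{-\xi_\pm}(x_\pm)\ \ge\ \frac{|z_0+(r+\eps)\xi_\pm|}{2\eps}\ \ge\ \frac{(r+\eps)-|z_0|}{2\eps}\ \ge\ 1-\sqrt{2\theta},$$
so the weak Harnack still applies, only with $\eta\sim\sqrt\theta$ in place of $\eta\sim\theta$, which propagates into $C\eta^{1/4}\eps$-wide sheets but still gives disjointness for $\theta$ small. Alternatively one can avoid the Chebyshev center altogether, which is closer to what the paper suggests: take a single $P_1=t_1\eps\xi_1\in K$ with $t_1>1-\eta$, use the weak Harnack plus \eqref{02} to show that \emph{every} $P=t\eps\xi\in K$ with $t>1-\eta$ has $|\xi-\xi_1|\le C\sqrt\eta$, and conclude that $K\subset B^m_{(1-\eta)\eps}(0)\cup\{\text{a small cap near }\eps\xi_1\}$, which is contained in a ball of radius $(1-\eta/2)\eps$ centered at $(\eta/2)\eps\xi_1$. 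Either route works; yours is fine once the centring is removed or replaced by the explicit estimate above.
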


If $u$ satisfies the hypotheses of Lemma \ref{l2} then its graph $\Gamma$, after a rotation in $\R^{n+m}$, satisfies \eqref{03} and also the conclusion \eqref{04} of Lemma \ref{l3}. We remark however that the rotation of $\Gamma$ might not have the graph property.  

\subsection{Compactness} Let's assume that $\Gamma$ solves \eqref{MS} in the viscosity sense, and that it has the Harnack inequality property of Lemma \ref{l3}. 

We iterate the conclusion one more time and obtain 
$$\Gamma \cap B_{1/4} \subset \quad B^n_1 \times B^m_{(1-\theta)^2\eps}(z_1).$$
By applying this property inductively $l$ times we find that the projection of $\Gamma \cap B_{2^{-l}}$ onto the $z$ variable belongs to a ball of radius $(1-\theta)^{-l} \eps$, as long as $$(1-\theta)^l2^l \eps \le \eps_0.$$

Next, assume we have a sequence of such compact sets $\Gamma_k$ that solve \eqref{MS} in the viscosity sense and satisfy  \eqref{03} for $\eps_k \to 0$. Then, after a dilation of factor $\eps_k^{-1}$ in the second variable, the rescaled sets
$$\tilde \Gamma_k:=\{(x,z)\quad | \quad (x, \eps_k z) \in \Gamma_k\}  $$ 
must converge in the Hausdorff distance, say in $B_{1/2}$, to the graph of a map $\bar u$
$$\bar \Gamma:=\{(x, \bar u(x)) \quad | \quad \bar u : B_{1/2}^n \to B_1^m, \quad \mbox{$\bar u$ is uniformly Holder continuous.}\}  $$

\begin{lem}\label{l35}
$$\triangle \bar u=0.$$ 
\end{lem}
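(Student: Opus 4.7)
I will show each component $\bar u^\alpha$ is a viscosity solution of the Laplace equation $\Delta \bar u^\alpha=0$ on $B^n_{1/2}$. By symmetry, I focus on showing $\bar u^1$ is a viscosity subsolution: for every smooth $\varphi$ touching $\bar u^1$ from above at $x_0\in B^n_{1/2}$, one has $\Delta\varphi(x_0)\ge 0$. Assume for contradiction $\Delta\varphi(x_0)<0$. Using the translation invariance of the minimal surface system, translate each $\Gamma_k$ by $-\eps_k\bar u(x_0)$ in the $z$-direction so that WLOG $x_0=0$ and $\bar u(0)=0$. Replace $\varphi$ by $\varphi_\delta(x):=\varphi(x)+\delta|x|^2$ for $\delta>0$ small, which preserves $\Delta\varphi_\delta(0)<0$ and yields the strict quadratic separation $\varphi_\delta-\bar u^1\ge \delta|x|^2$ on a ball $B^n_r$.

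The heart of the argument is to construct, in the original coordinates, the comparison function
$$H_k(x,z) := z_1-\eps_k\varphi_\delta(x)+\frac{\beta_k}{\eps_k}|z'|^2, \qquad \beta_k:=\eps_k^2\|D^2\varphi_\delta\|_{L^\infty(B^n_r)},$$
where $z'=(z_2,\dots,z_m)$, and verify that $\triangle_L H_k>0$ for every $n$-plane $L\perp \nabla H_k$ at every point of the cylinder $U_k:=B^n_r\times B^m_{\eps_k r}$. The computation follows the spirit of Lemma \ref{l1}: writing an orthonormal basis of $L$ as $e_i=(a_i,b_i^1,b_i')$ and letting $A$ be the matrix with columns $a_i$, Bessel's inequality gives $AA^T\le I$, while orthogonality to $\nabla H_k\approx e_{z_1}$ forces $b_i^1=O(\eps_k)$, so $\mathrm{tr}(I-AA^T)$ is essentially $t:=\sum|b_i'|^2$. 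Splitting $\mathrm{tr}(D^2\varphi_\delta\,AA^T)=\Delta\varphi_\delta-\mathrm{tr}(D^2\varphi_\delta(I-AA^T))$ and using the PSD bound $|\mathrm{tr}(D^2\varphi_\delta(I-AA^T))|\le \|D^2\varphi_\delta\|\,\mathrm{tr}(I-AA^T)$, the choice of $\beta_k$ is calibrated so that the contribution of tilted $L$'s is absorbed by $\beta_k t/\eps_k$, leaving $\triangle_L H_k\ge \tfrac{1}{2}\eps_k|\Delta\varphi_\delta(x)|>0$ on $U_k$ for all large $k$.

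On the limiting graph $\{(x,\eps_k\bar u(x)):x\in B^n_r\}$ one has $H_k\le -\eps_k\delta|x|^2+O(\eps_k^3)$, so $\max H_k\to 0$ and is attained in the interior of $U_k$ (on $|x|=r$, $H_k<0$; and by continuity of $\bar u$ with $\bar u(0)=0$, the graph does not reach $|z|=\eps_k r$ for $r$ small). The Hausdorff convergence $\tilde\Gamma_k\to \bar\Gamma$, combined with $\|\nabla H_k\|_{L^\infty(U_k)}=O(1)$, transfers this to $\Gamma_k$: its maximum $c_k$ of $H_k$ over $\Gamma_k\cap \bar U_k$ tends to $0$ and is attained at an interior point $X_k$. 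Since $H_k-c_k$ is still a comparison function, $\Gamma_k$ touches $\{H_k-c_k=0\}$ at $X_k$ from the forbidden side $\{H_k-c_k\le 0\}$, contradicting the viscosity property of $\Gamma_k$. The supersolution test (and the other components) follow by the same construction with minor sign changes.

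The main obstacle is verifying that $H_k$ is a comparison function using only the assumption $\Delta\varphi(x_0)<0$, rather than the strictly stronger Pucci inequality $\mathcal M^+(D^2\varphi)<0$ that appears in Lemma \ref{l1}. The resolution is that as $\eps_k\to 0$, the normal $\nabla H_k$ concentrates in the $z_1$-direction, so the $n$-planes $L$ one has to test cluster near the $x$-subspace, forcing $\mathrm{tr}(D^2\varphi_\delta AA^T)\to \Delta\varphi_\delta$. The penalty $|z'|^2/\eps_k$ with coefficient $\beta_k=O(\eps_k^2)$ is just large enough to handle $z'$-tilts of $L$, yet small enough that on $\Gamma_k$ (where $|z'|\le \eps_k$) it contributes only $O(\eps_k^3)$ and does not disturb the touching produced by the quadratic separation.
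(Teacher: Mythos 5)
Your approach is genuinely different from the paper's and, up to a fixable imprecision noted below, correct. The paper proves $\triangle\bar u=0$ \emph{globally} by a harmonic-replacement argument: it takes the harmonic function $h$ with the same boundary data, observes that $|\bar u-h|^2+\eta|x|^2$ has an interior maximum if $\bar u\neq h$, transfers this to $\Gamma_k$ via $H(X)=|\eps_k^{-1}z-h(x)|^2+\eta|x|^2$, and checks that this squared-distance function is a comparison function. The boundary condition $\bar u=h$ on $\partial B^n_{1/2}$ makes the interior-touching automatic. You instead prove that each component $\bar u^\alpha$ is a viscosity solution of the Laplace equation through a \emph{pointwise} touching argument with a test function $\varphi$, transferring the contradiction to $\Gamma_k$ via the ``graph-like'' comparison function $H_k=z_1-\eps_k\varphi_\delta+\frac{\beta_k}{\eps_k}|z'|^2$. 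The two constructions play the same role (a penalty in the $z'$ directions with carefully calibrated coefficient absorbs tilts of the tangent plane $L$, while the dominant part reduces to the Laplacian of the test function on the $x$-plane), but yours replaces the paper's global energy-flavored maximum by the local viscosity-theoretic criterion. A benefit of your route is that it isolates exactly what is being proven --- viscosity harmonicity of each component --- and the calibration $\beta_k=\eps_k^2\|D^2\varphi_\delta\|$ is transparent; the cost is that you must locate the touching point by hand rather than having it produced automatically by the boundary condition.

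One point needs tightening. Your claim that ``by continuity of $\bar u$ with $\bar u(0)=0$, the graph does not reach $|z|=\eps_k r$ for $r$ small'' is not literally true: if $\bar u$ is merely H\"older (or just continuous), the modulus of continuity $\omega(r)$ need not satisfy $\omega(r)<r$ for small $r$, so the limiting graph can exit the cylinder $B^n_r\times B^m_r$ in the rescaled variable $\tilde z=z/\eps_k$. This does not break the argument --- you can either enlarge the $z$-radius of the cylinder to $2\omega(r)$ (the verification that $H_k$ is a comparison function still goes through since this only changes $|z'|$ by a factor bounded independently of $k$), or simply observe that on $\tilde\Gamma_k\cap\{|\tilde z|=r\}$ one necessarily has $|x|\ge\omega^{-1}(r/2)>0$ for large $k$, so the quadratic separation $\varphi_\delta-\bar u^1\ge\delta|x|^2$ still forces $H_k$ strictly negative there. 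Either fix eliminates the boundary case and completes the argument as you intended.
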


\begin{proof}
Let $h$ be the harmonic function in $B^n_{1/2}$ which is equal to $\bar u$ on $\p B_{1/2}^n$. If $\bar u \ne h$ then
$$|\bar u-h|^2 + \eta |x|^2$$
achieves its maximum away from $\p B^n_{1/2}$ provided that $\eta$ is chosen sufficiently small. This means that for all large $k$
$$H(X)=|f(X)|^2 + \eta |x|^2, \quad \quad f(X):=\eps_k ^{-1}z-h(x),$$
achieves its maximum on $\Gamma_k \cap B_{1/2}$ away from the boundary $\p B_{1/2}$. 

It suffices to show that $H$ is a comparison function in the cylinder $|z| \le \eps_k=: \eps$. Fix a point $X_0=(x_0,z_0)$ and write
$$H(X) = G_1(X) + G_2(X)  $$
with
$$G_1(X)=| f(X_0)|^2 +2 f(X_0) \cdot(f(X)-f(X_0)) + \eta|x|^2,$$
$$G_2(X)=|f(X)-f(X_0)|^2.$$
Notice that $$G_2(X_0)=0, \quad \nabla G_2(X_0)=0, \quad D^2 G_2(X_0) \ge 0,$$
and for any unit direction $\xi$ outside the cone $\mathcal C _\delta$ (see \eqref{C}) we have 
\be\label{05}
\p^2_{\xi \xi} G_2(X_0) \ge c(\delta) \eps^{-2}, \quad \quad \quad \forall \, \xi \notin \mathcal C_\delta.
\ee
 On the other hand $$|D^2 G_1(X_0)| \le C,$$
and $\triangle_L G_1(X_0) \ge c \eta$ when $L$ coincides with the $x$-subspace. By continuity, $$\triangle_L G_1(X_0) > 0 \quad \mbox{ when} \quad  L \subset \mathcal C_\delta,$$
by choosing $\delta$ depending on $\eta$. If $L$ intersects the complement of $\mathcal C_\delta$ then the inequality above is obvious due to \eqref{05}.

\end{proof}

Since $\bar u$ is harmonic in $B_{1/2}^n$, $|\bar u| \le 1$, we obtain $|D^2\bar u|\le C$ in $B_{1/4}^n$ hence
\be\label{06} 
|\bar u - l_0(x)| \le \frac 1 4 \eta \quad \quad \mbox{in} \quad B_{2\eta}^n,
\ee
for some $\eta$ small, universal and $l_0$ is the linear part of $\bar u$ at $0$. Let us assume for simplicity that $0 \in \Gamma_k$ for all $k$'s. Then, \eqref{06} implies that in $B_\eta$ the graphs $\Gamma^k$ can be included in a slight rotation of the cylinder $$ B^n_\eta \times  B^m_{\eps_k \eta/2}.$$
The flatness of the cylinders, i.e. the ratio of the radii of the $m$-dimensional ``height" ball over the $n$-dimensional ``bottom" ball, improved from $\eps_k$ in $B_1$ to $\eps_k/2$ in $B_\eta$. 

This compactness argument together with Lemma \ref{l2} gives the improvement of flatness property of viscosity solutions.

\begin{prop}[Improvement of flatness for graphs]\label{p1}
Assume $u$ is a viscosity solution such that
$$|u-l| \le \eps \quad \mbox{in} \quad B^n_1,$$
where $l(x)=b+ Ax$. Then, there exists a linear map $\tilde l$ such that
$$|u- \tilde l| \le \frac{\eps}{2}\eta \quad \mbox{in} \quad B^n_\eta,$$
with $\eta$ a small fixed constant depending only on $n$, $|A|$.
\end{prop}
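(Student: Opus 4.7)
I would prove Proposition \ref{p1} by a compactness-and-contradiction argument; nearly all the work has already been done by Lemmas \ref{l2} and \ref{l35} and by the discussion following Lemma \ref{l35}. Suppose the proposition fails for the small universal $\eta$ produced by \eqref{06}. Then one finds sequences $\eps_k \to 0$, linear maps $l_k(x)=b_k+A_k x$ with $|A_k|$ uniformly bounded, and viscosity solutions $u_k$ with $|u_k - l_k|\le \eps_k$ in $B_1^n$, such that no affine $\tilde l$ satisfies $|u_k-\tilde l|\le \eps_k\eta/2$ in $B_\eta^n$. Consider the rescalings $\tilde u_k := (u_k-l_k)/\eps_k : B_1^n \to B_1^m$.

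Iterating Lemma \ref{l2} at every point of $B_{1/2}^n$ and every dyadic scale (using the scaling invariance $v(y)=(u(x_0+ry)-u(x_0))/r$ of the minimal surface system) produces a uniform H\"older modulus of continuity for $\tilde u_k$ on $B_{1/2}^n$, with constants depending on $n$ and $|A|$ only through the Pucci constant $c_0$ of Lemma \ref{l1}. Compactness then extracts a subsequence converging uniformly to a continuous limit $\bar u : B_{1/2}^n \to B_1^m$, and the dilated graphs $\tilde \Gamma_k$ converge to the graph of $\bar u$ in Hausdorff distance. This is exactly the setup preceding Lemma \ref{l35}, so that lemma gives $\triangle \bar u=0$, and the interior estimate \eqref{06} then supplies a linear map $l_0$ with $|\bar u - l_0|\le \eta/4$ on $B_{2\eta}^n$ for a suitable small $\eta$.

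By uniform convergence, $|\tilde u_k - l_0|\le \eta/2$ on $B_\eta^n$ for all large $k$. Setting $\tilde l_k(x) := l_k(x)+\eps_k\, l_0(x)$ and multiplying through by $\eps_k$ yields $|u_k-\tilde l_k|\le \eps_k\eta/2$ on $B_\eta^n$, contradicting the defining property of the sequence. The main technical hurdle is guaranteeing that the Hausdorff limit of the dilated graphs $\tilde \Gamma_k$ is a genuine graph, since this is what lets Lemma \ref{l35} apply to $\bar u$; this is precisely where the uniform H\"older control coming from the iterated Harnack inequality of Lemma \ref{l2} is essential, as it rules out any vertical concentration in the passage to the limit.
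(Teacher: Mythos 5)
Your overall scheme (iterate the Harnack lemma, extract a H\"older limit by compactness, show the limit solves a linear equation, pull back interior estimates) is the same as the paper's. But there is a substantive gap in the step where you apply Lemma \ref{l35} to $\bar u = \lim_k (u_k - l_k)/\eps_k$. That lemma is stated and proved for the Hausdorff limit of \emph{vertical dilations} $\tilde\Gamma_k = \{(x,z): (x,\eps_k z)\in \Gamma_k\}$ of sets that lie in thin horizontal slabs $B^n_1\times B^m_{\eps_k}$ (hypothesis \eqref{03}), i.e.\ the case $l\equiv 0$. Your $\tilde u_k$ are obtained by first \emph{shearing} by $-l_k$ and then dilating; a shear is not a rigid motion of $\R^{n+m}$, so it does not preserve minimality or the comparison-function structure. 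Concretely, in the proof of Lemma \ref{l35} the decisive estimate $\partial^2_{\xi\xi} G_2(X_0)\ge c\,\eps^{-2}$ is obtained for $\xi\notin\mathcal C_\delta$ because $Df\cdot \xi = -Dh\,\xi_x + \eps^{-1}\xi_z$ is of size $\eps^{-1}$ there. With $f(X)=\eps^{-1}(z-l_k(x))-h(x)$ instead one gets $Df\cdot\xi=\eps^{-1}(\xi_z-A_k\xi_x)-Dh\,\xi_x$, which can be $O(1)$ for directions $\xi$ well outside $\mathcal C_\delta$ (whenever $\xi_z\approx A_k\xi_x$). Replacing $\mathcal C_\delta$ by a cone around the plane $\{z=A_kx\}$ fixes the $G_2$ estimate but then breaks the $G_1$ estimate, because $\triangle_L h\ne 0$ for $L$ near that slanted plane even though $\triangle_x h=0$. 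In short, the blow-up limit in the sheared picture is \emph{not} harmonic when $A=Dl\ne 0$; it solves the constant-coefficient linearized system $F_{\alpha i,\beta j}(A)\,\bar u^\beta_{ij}=0$.

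The paper sidesteps this by reducing to $l\equiv 0$ via a genuine rotation of $\R^{n+m}$ that takes the $l$-plane to the horizontal $x$-subspace; after that rotation the blow-up limit really is harmonic and Lemma \ref{l35} applies. The price is that the rotated set may fail to be a graph, which is exactly why the paper passes through Lemma \ref{l3} and the density bound \eqref{02} rather than Lemma \ref{l2} alone --- the remark ``the rotation of $\Gamma$ might not have the graph property'' flags this. So the ``main technical hurdle'' you identify (graph preservation in the limit) is not really the crux; the crux is either (a) performing the rotation and then handling non-graph sets via \eqref{02} and Lemma \ref{l3}, as the paper does, or (b) proving a sheared analogue of Lemma \ref{l35} in which $h$ solves $F_{\alpha i,\beta j}(A)h^\beta_{ij}=0$ and the cone $\mathcal C_\delta$ is taken around the $l$-plane. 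Either route works, since the linearized system is uniformly elliptic with constant coefficients and has the same interior $C^2$ estimates as the Laplacian; but as written, your appeal to Lemma \ref{l35} is out of scope and the conclusion $\triangle\bar u=0$ is incorrect for $A\ne 0$.
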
 

Similarly, if $\Gamma$ is a viscosity minimal set in $B_1$ (not necessarily a graph) and we assume that \eqref{02} holds up to a small scale $c(\delta)$ i.e.
\be\label{07}
\mathcal H^n (\Gamma \cap B_r(x,0)) \le (2-\delta) \mathcal H^n (B_r^n), \quad \quad \forall x \in B^n_1, \quad r \ge c(\delta),
\ee
then Lemma \ref{l3} can be applied several times and the compactness argument holds. We obtain the following version of Proposition \eqref{p1} for viscosity minimal sets.

\begin{prop}\label{p2}
Let $\Gamma$ be a viscosity minimal set in $B_1$ and assume \eqref{07} holds and
$$\pi_z(\Gamma \cap B_1) \subset B^m_\eps,$$
where $\pi_z$ denotes the projection onto the $z$ variable. Then there exists a $\bar z$ subspace obtained by a rotation of the $z$ coordinates, such that
$$\pi_{\bar z} (\Gamma \cap B_\eta) \subset B^m_{\eta \eps /2}(\bar z_0).$$
Here $\eta$, $\delta$, $\eps_0$ are small constant that depend only on $n$. 
\end{prop}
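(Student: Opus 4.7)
The plan is to repeat the compactness/improvement-of-flatness scheme used between Lemma \ref{l3} and Proposition \ref{p1}, with one additional ingredient: the mass bound \eqref{07} must be used to force the vertical-rescaling limit to be a single graph rather than a multi-sheeted minimal set.

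First I would iterate Lemma \ref{l3} at dyadic scales. Under assumption \eqref{07}, its hypotheses persist at every scale $2^{-l} \ge c(\delta)$ centered at any point of $\Gamma$ sufficiently close to the $x$-subspace, so $l$ iterations give
$$\Gamma \cap B_{2^{-l}}(X_0) \quad \subset \quad B^n_{2^{-l}} \times B^m_{(1-\theta)^l \eps}(z_l)$$
as long as $(1-\theta)^l 2^l \eps \le \eps_0$. This yields a uniform Hausdorff modulus of continuity in the $z$ direction for the rescaled sets.

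Second, I would argue by contradiction and compactness. Assume the conclusion fails for a sequence $\Gamma_k$ of viscosity minimal sets satisfying \eqref{03}--\eqref{07} with $\eps_k \to 0$ for which no rotation of the $z$-axes produces the required improvement in $B_\eta$. Dilate vertically by $\eps_k^{-1}$,
$$\tilde \Gamma_k := \{(x,z) \mid (x, \eps_k z) \in \Gamma_k\},$$
and extract a Hausdorff-convergent subsequence $\tilde \Gamma_k \to \bar \Gamma$ on compact subsets of $B_{1/2}^n \times \R^m$, using the iterated Harnack inequality of the previous step. The key point is then that $\bar \Gamma$ is the graph of a uniformly Holder continuous function $\bar u : B_{1/2}^n \to \R^m$. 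This is where \eqref{07} enters: if the fibre $\{x_0\} \times \R^m$ were to meet $\bar \Gamma$ at two distinct points, then after undoing the vertical rescaling one would find, for large $k$, a ball $B_r(X_0)$ with $r \ge c(\delta)$ in which $\Gamma_k$ accumulates mass approaching $2\, \mathcal H^n(B_r^n)$, contradicting the density bound.

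Third, the stability of the viscosity-solution property under Hausdorff convergence and the exact argument of Lemma \ref{l35} give $\triangle \bar u = 0$. Harmonic regularity then yields $|\bar u - l_0| \le \eta/4$ in $B^n_{2\eta}$ for some universal $\eta$ and $l_0$ the linear part of $\bar u$ at $0$. Slightly rotating the $z$-axes so that the new $\bar z_1$ direction is normal to the affine graph of $l_0$ and rescaling back by $\eps_k$, we obtain
$$\pi_{\bar z}(\Gamma_k \cap B_\eta) \subset B^m_{\eta \eps_k/2}(\bar z_0)$$
for all large $k$, contradicting the choice of $\Gamma_k$. The main obstacle is the third step above (graph-ness of the limit): one has to turn the strict density bound $(2-\delta)$ into a quantitative obstruction to multi-sheeted Hausdorff limits. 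Everything else -- the iterated Harnack estimate, stability of the viscosity property, and the harmonic limit argument -- is already present in the excerpt.
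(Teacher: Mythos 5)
Your overall plan coincides with the paper's (very terse) proof: iterate Lemma~\ref{l3} thanks to \eqref{07}, pass to the vertical rescaling and extract a Hausdorff limit, identify the limit as the graph of a harmonic function via the argument of Lemma~\ref{l35}, and transfer the linear approximation back to $\Gamma_k$. The paper dispatches this in one sentence, stating that \eqref{07} allows Lemma~\ref{l3} to be applied repeatedly and that the compactness argument then goes through unchanged, so you have reconstructed the intended route.

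The place where your write-up drifts from the paper's mechanism is step~2, the ``graph-ness of the limit.'' You argue that two distinct points in a fibre of $\bar\Gamma$, after undoing the vertical rescaling, would produce a ball $B_r(X_0)$ with $r \ge c(\delta)$ in which $\Gamma_k$ has mass close to $2\,\mathcal H^n(B^n_r)$. As stated this does not follow: two nearby points in $\Gamma_k$ (they are at mutual distance $O(\eps_k)$ after undoing the rescaling) do not by themselves produce two $n$-dimensional sheets, so no mass accumulation is forced. The density bound \eqref{07} is actually used \emph{inside} the proof of the one-step Harnack inequality (Lemma~\ref{l3}): there one shows, via the weak Harnack inequality applied to $w = \tfrac12|\tilde u + \xi|$, that a large fraction of $B^n_1$ carries a point of $\Gamma$ near $\xi$; if the same held near $-\xi$ one would get, by the area formula applied to the $x$-projection, mass $\ge 2(1-2\mu)|B^n_1|$ in a ball of unit radius, which contradicts the $(2-\delta)$ bound. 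It is the resulting decay of the vertical oscillation across dyadic scales (your step~1), not a direct fibre-counting argument, that controls multivaluedness in the limit. Note also that this control only extends down to scale $c(\delta)$, so the limit is a priori only an approximate graph with fibres of diameter $O(c(\delta)^\gamma)$; the argument of Lemma~\ref{l35} and the choice of $\eta$ relative to $c(\delta)$ have to be adjusted to absorb this, a point both the paper and your proposal leave implicit. If you replace the mass-counting claim in step~2 by the observation that the iterated Harnack estimate of step~1 already provides the needed $z$-modulus (with \eqref{07} entering only through Lemma~\ref{l3}), the proposal matches the paper's argument.
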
 

Proposition \eqref{p1} implies that flat viscosity solutions are $C^{1,\alpha}$ in $B^n_{1/2}$. In order to obtain the $C^{2,\alpha}$ estimates we need to repeat the arguments above and approximate $u$ by harmonic quadratic polynomials $q(x)$ instead of linear functions $l(x)$. We sketch some of the details below. Since $u$ is already $C^{1/\alpha}$, we may rotate coordinates and reduce the $\eps$-smallness hypothesis in Theorem \ref{T1} to the case $l \equiv 0$, i.e. $|u| \le \eps$.
Then the theorem follows by iterating the quadratic improvement of flatness lemma below after performing the necessary rotations of coordinates.

\begin{lem}
Assume $u$ is a viscosity solution such that
$$|u-q| \le \eps \quad \mbox{in} \quad B^n_1, \quad \quad \eps \le \eps_0,$$
where $q:\R^n \to \R^m$ is a harmonic quadratic polynomial with coefficients bounded by $\eps^\beta$ for some $\beta \in (1/2,1)$. Then
$$|u- \tilde q| \le \frac{\eps}{2}\eta^2 \quad \mbox{in} \quad B^n_\eta,$$
with $\tilde q$ a  harmonic quadratic polynomial, and $\eta(n)$, $\eps_0(n,\beta)$ are sufficiently small.
\end{lem}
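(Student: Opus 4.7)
The plan is to adapt the compactness-plus-comparison scheme of Proposition \ref{p1} and Lemma \ref{l35}, replacing the reference linear map by the harmonic quadratic $q$ and the harmonic target $h$ by the harmonic quadratic Taylor polynomial of the limiting rescaled deviation. Arguing by contradiction, assume there exist viscosity solutions $u_k$, harmonic quadratic polynomials $q_k$ with coefficients bounded by $\eps_k^\beta$, and $\eps_k \searrow 0$ with $|u_k - q_k|\le \eps_k$ in $B^n_1$, yet the conclusion fails. Set $\tilde u_k := (u_k - q_k)/\eps_k$, so $|\tilde u_k| \le 1$. On any ball $B_r(x_0) \subset B^n_1$, $u_k$ differs from the linearization $\ell_{x_0}(x) := q_k(x_0)+Dq_k(x_0)(x-x_0)$ of $q_k$ by at most $\eps_k + \tfrac12 \eps_k^\beta r^2$, so iterating Lemma \ref{l2} on dyadic subballs, with $\ell_{x_0}$ as the linear comparison, yields uniform interior H\"older estimates for $\tilde u_k$. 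A subsequence converges uniformly on compact subsets of $B^n_1$ to a continuous $\bar u$ with $|\bar u|\le 1$.

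The decisive step is to show that $\bar u$ is harmonic. Suppose not; let $h$ be the harmonic extension of $\bar u|_{\partial B^n_{1/2}}$ and choose $\eta>0$ small so that $|\bar u - h|^2 + \eta|x|^2$ attains its maximum on $\overline{B^n_{1/2}}$ at an interior point. Then, for $k$ large, the function
$$H(X) := |f(X)|^2 + \eta |x|^2, \qquad f(X) := \eps_k^{-1}(z - q_k(x)) - h(x),$$
attains its maximum on $\Gamma_k \cap \overline{B_{1/2}}$ at an interior point, and it suffices to verify $H$ is a comparison function in the cylinder $\{|z - q_k(x)|\le \eps_k\}$. At $X_0 = (x_0, z_0)$ we split $H = G_1 + G_2$ with $G_2(X) := |f(X) - f(X_0)|^2$. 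The matrix $D^2 G_2(X_0) = 2\,\nabla f(X_0)^T \nabla f(X_0)$ is positive semidefinite of rank $m$, its $n$-dim kernel being $O(\eps_k^\beta)$-close to the $x$-subspace, with nonzero eigenvalues of order $\eps_k^{-2}$. The Hessian $D^2 G_1$ carries only an $xx$-block, with operator norm $O(\eps_k^{\beta - 1})$, but \emph{thanks to $\Delta q_k = 0$ and $\Delta h = 0$} its trace over the $x$-subspace is exactly $2\eta n$.

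For any $n$-dimensional $L \perp \nabla H(X_0)$ making angle $\theta$ with the $x$-subspace, combining these ingredients produces an estimate of the form
$$\Delta_L H \;\ge\; 2\eta n \;-\; C\theta\,\eps_k^{\beta - 1} \;-\; C\,\eps_k^{3\beta - 1} \;+\; c\,\theta^2\eps_k^{-2}.$$
Treating the right-hand side as a quadratic in $\theta$, the discriminant is negative as soon as $\eps_k^{2\beta} \ll \eta$, and the residual $O(\eps_k^{3\beta - 1})$ term is dominated by $\eta n$ for $\eps_k$ small in terms of $\eta$ and $\beta$; both constraints are consistent with $\beta > 1/2$. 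Hence $\Delta_L H > 0$ for every admissible $L$, so $H$ is a comparison function, contradicting the interior maximum of $H$ on $\Gamma_k$. Therefore $\bar u$ is harmonic.

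Interior regularity for harmonic functions now furnishes the harmonic quadratic Taylor polynomial $P(x) := \bar u(0) + D\bar u(0)\cdot x + \tfrac12 x^T D^2\bar u(0)\,x$ at the origin with $|\bar u - P| \le C|x|^3 \le \tfrac14 \eta^2$ in $B^n_\eta$, for $\eta(n)$ small. Setting $\tilde q_k := q_k + \eps_k P$, which is itself a harmonic quadratic polynomial, and taking $k$ large enough that $|\tilde u_k - \bar u|\le \tfrac14\eta^2$ on $B^n_\eta$, yields $|u_k - \tilde q_k|\le \tfrac12 \eps_k \eta^2$ on $B^n_\eta$, contradicting the failure assumption. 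The main obstacle is the verification that $H$ is a comparison function in the presence of the nontrivial harmonic perturbation $q_k$: the harmonicity of $q_k$ is precisely what cancels the leading $\eps_k^{-1}D^2 q_k$ contribution to the trace, and the hypothesis $\beta > 1/2$ is what makes the remaining $O(\eps_k^{\beta - 1})$ off-trace contributions controllable by the elliptic gain $\eps_k^{-2}$ coming from $G_2$.
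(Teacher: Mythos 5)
Your proposal follows the paper's overall two-step scheme---compactness of $\tilde u_k = (u_k-q_k)/\eps_k$, then verification that the limit $\bar u$ is harmonic by showing $H(X) = |f(X)|^2 + \eta|x|^2$ is a comparison function---and the second step is essentially correct (the split $H = G_1 + G_2$, the cancellation $\Delta_x G_1 = 2\eta n$ from $\Delta q_k = \Delta h = 0$, and the lower bound from $D^2G_2 = 2\nabla f^T\nabla f$ all match the paper, up to the linear-in-$\theta$ error term which should really be quadratic, but that only strengthens your conclusion).

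The gap is in the compactness step. You try to derive uniform H\"older bounds on $\tilde u_k$ by applying Lemma \ref{l2} to $u_k - \ell_{x_0}$ on dyadic sub-balls, where $\ell_{x_0}$ is the linearization of $q_k$ at $x_0$. But $|q_k - \ell_{x_0}| \sim \eps_k^\beta r^2$ on $B_r(x_0)$, and this linearization error exceeds $\eps_k$ precisely on the macroscopic scales $r \gtrsim \eps_k^{(1-\beta)/2}$ where equicontinuity is actually needed (and $\eps_k^{(1-\beta)/2}\to 0$). Concretely, applying Lemma \ref{l2} at scale $1$ gives $\mathrm{osc}_{B_{1/2}}(u_k - \ell_{x_0}) \le (1-\theta)(2\eps_k + \eps_k^\beta)$, which is not a contraction for $\tilde u_k$ since $\eps_k^\beta \gg \eps_k$; it controls $u_k$ at the $\eps_k^\beta$ scale, not the $\eps_k$ scale. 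The paper closes this by proving a \emph{direct} Harnack inequality for $\tilde u$: it shows that $H(x,z) = |z - q(x)| - \eps\varphi(x)$ is a comparison function provided $|\varphi|_{C^{1,1}} \le 1$ and $\Delta\varphi \le -\eps^{2\beta-1}$. The crucial point is that because $q$ is \emph{harmonic}, $\Delta_x(z_1 - q_1(x)) = 0$, so the dominant trace term on the $x$-subspace is $-\eps\Delta\varphi \ge \eps^{2\beta}$ while the off-trace error is only $|D^2(q_1 + \eps\varphi)| \le C\eps^\beta$; the competition $\eps^{2\beta-1} \gg \eps^\beta$ forces the constraint $\beta < 1$, and $\eps^{2\beta-1}\to 0$ uses $\beta > 1/2$. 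That modified comparison argument---the analogue of Lemma \ref{l1} for the quadratic reference---is exactly what your write-up is missing, and it is where the harmonicity and coefficient bound of $q$ and the window $\beta\in(1/2,1)$ enter a first time, before the part-b) verification you carried out.
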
   

For the proof of the lemma it suffices to establish

 a)  Harnack inequality for $\tilde u =(u-q)/\eps$ and,

 b) the convergence of $\tilde u$ to a harmonic map as $\eps \to 0$.  

\

Part a), as in Lemmas \ref{l1} and \ref{l2}, follows once we show that
$$H(x,z)= |z-q(x)|-\eps \varphi(x),$$
is a comparison function provided that $$|\varphi|_{C^{1,1}} \le 1, \quad \triangle \varphi \le - \eps^{2\beta-1}.$$ As in proof af the Lemma \ref{l1} we bound $H$ at $X_0$ by $G_1 + G_2$ with
$$G_1(X)=z_1-q_1(x) - \eps \varphi(x), \quad \quad G_2(X)= \frac {1}{2 \eps}|z'-q'(x)|^2.$$
and at $X_0$, $$|D^2G_1| \le C \eps ^ \beta, \quad \quad \triangle_x G_1=- \eps \triangle \varphi \ge \eps^{2 \beta}.$$
If $L$ makes an angle less than $C \eps^\beta$ with the $x$ subspace,  
$$L\subset \mathcal C_\sigma, \quad \quad \mbox{with} \quad \sigma:=C \eps^\beta$$ then
$$\triangle_L G_1 \ge \triangle_x G_1 - C \eps^\beta \sigma^2 >0.$$ 
Otherwise we can find a unit direction $\xi \in L$ outside $\mathcal C_\sigma$ such that
$$\p^2_{\xi \xi} G_2 \ge \frac {1}{ 8 \eps} \sigma^2 > C \eps^\beta,$$
and the claim follows. 

\

For part b) we argue as in Lemma \ref{l35}. We need to show that
$$H(X)=|f(X)|^2 + \eta |x|^2, \quad \quad f(X):=\eps ^{-1}(z-q(x))-h(x)$$
is a comparison function. 

As before we bound $H$ at $X_0$ by $G_1+ G_2$ where $G_1$ is a linear function of $f(X)$
$$G_1=|f(X_0)|^2 + f(X_0) \cdot (f(X)-f(X_0)) + \eta |x|^2,$$
and
$$ G_2=|f(X)-f(X_0)|^2$$
We have
$$|D^2G_1| \le C \eps^{\beta -1}, \quad \triangle_x G_1 \ge \eta.$$
If $L \subset \mathcal C_\sigma$ with $\mathcal C_\sigma$ defined above then
$$\triangle _L G_1 \ge \triangle_x G_1 -C \eps^{\beta-1} \sigma^2  >0.$$ 
Since for any unit direction $\xi \notin \mathcal C_\sigma$,
$$\p^2_{\xi \xi} G_2 \ge c \eps^{-2}\sigma^2 \ge c \eps^{2 \beta - 2} \gg |D^2 G_1|,$$
the desired conclusion follows.

\end{document}